
\documentclass{amcjou}


\def \r{\mathbb R}

\hyphenation{Ko-ko-tsa-kis}

\DeclareMathOperator{\sgn}{sgn}
\DeclareMathOperator{\arccotan}{arccotan}

\begin{document}
\input{epsf}

\begin{frontmatter}   

\titledata{On stratifications for planar tensegrities with a small number of vertices}           
{}                                              

\authordata{Oleg Karpenkov}                                 
{TU Graz, Kopernikusgasse 24, A8010, Graz, Austria}         
{karpenkov@tugraz.at}                                       
{Partially supported by RFBR SS-709.2008.1 grant and
by FWF grant No.~M~1273-N18.}

\authordata{Jan Schepers}            
{Departement Wiskunde, Katholieke Universiteit Leuven,
Celestijnenlaan 200B, 3001, Leuven, Belgium}      %
{janschepers1@gmail.com}
{Postdoctoral Fellow of the Research Foundation - Flanders (FWO).}                                       

\authordata{Brigitte Servatius}            
{Mathematics Department, Worcester Polytechnic Institute,
100 Institute Road, Worcester, MA 01609-2280, USA}      %
{bservat@math.wpi.edu}
{}                                       

\keywords{Tensegrities, equilibrium, surgeries.}               
\msc{52C30, 05C10}                                             

\begin{abstract}
In this paper we discuss several results about the structure of the
configuration space of two-dimensional tensegrities with a small
number of points. We briefly describe the technique of surgeries that
is used to find geometric conditions for tensegrities. Further we
introduce a new surgery for three-dimensional tensegrities. Within
this paper we formulate additional open problems related to the
stratification space of tensegrities.
\end{abstract}

\end{frontmatter}   %

\section{Introduction}

In this paper we study the stratification spaces of tensegrities
with a small number of points. We work mostly with planar
tensegrities. In the case of 4 and 5 point configurations we
give an explicit description of all the strata and present a
visualization of the entire stratification space. Further we give
a geometric description of the strata for 6 and 7 points and use the
technique of surgeries to find new geometric conditions adding to the
list of already known ones. In particular, we introduce a new
surgery for tensegrities in $\r^3$.

\subsection{Configuration space of tensegrities}

The first steps in the study of rigidity and flexibility of
tensegrities were made by B.~Roth and W.~Whiteley in~\cite{Rot1}
and further developed by R.~Connelly and W.~Whiteley
in~\cite{Con}, see also the survey about rigidity in~\cite{Whi}.
N.~L.~White and W.~Whiteley in~\cite{WW} started the
investigation of geometric conditions for a tensegrity with
prescribed bars and cables. In the preprint~\cite{Guz2}
M.~de~Guzm\'an describes several other examples of geometric
conditions for tensegrities.

Let us recall standard definitions of tensegrities (as
in~\cite{CC}, \cite{DKS}, etc.). See also~\cite{Ser} for a
collection of open problems and a good bibliography.

\begin{defn}
Fix a positive integer $d$. Let $G=(V,E)$ be an arbitrary graph
without loops and multiple edges. Let it have $n$ vertices $v_1,\ldots , v_n$.
\itemize
\item{A {\it configuration} is a finite collection $P$ of $n$
labeled points $(p_1,p_2,\ldots,p_n)$, where each point $p_i$
(also called a {\it vertex}) is in a fixed Euclidean space
$\r^d$.}
\item{The embedding of $G$ with straight edges, induced by mapping $v_j$ to $p_j$ is called a {\it tensegrity framework} and it is denoted as $G(P)$.}
\item{We say that a {\it load} or {\it force} $F$ acting on a
framework $G(P)$ in $\r^d$ is an assignment of a vector $f_i$ in
$\r^d$ to each vertex $i$ of $G$.}
\item{We say that a {\it stress} $w$ for a framework $G(P)$ in
$\r^d$ is an assignment of a real number $w_{i,j}=w_{j,i}$ (we
call it an {\it edge stress}) to each edge $p_ip_j$ of $G$. An
edge stress is regarded as a tension or a compression in the edge
$p_ip_j$. For simplicity reasons we put $w_{i,j}=0$ if there is no
edge between the corresponding vertices. We say that $w$ {\it
resolves} a load $F$ if the following vector equation holds for
each vertex $i$ of $G$:
$$
f_i+\sum\limits_{\{j|j\ne i\}} w_{i,j}(p_j-p_i)=0.
$$
By $p_j{-}p_i$ we denote the vector from the point $p_i$ to the
point $p_j$.}
\item{A stress $w$ is called a {\it self stress} if, the
following equilibrium condition is fulfilled at every vertex
$p_i$:
$$
\sum\limits_{\{j|j\ne i\}} w_{i,j}(p_j-p_i)=0.
$$
}
\item{A couple $(G(P),w)$ is called a {\it tensegrity} if
$w$ is a self stress for the framework $G(P)$.}

\item If $w_{i,j} < 0$ then we call the edge $p_ip_j$ a \textit{cable}, if $w_{i,j} > 0$ we call it a \textit{strut}.
\end{defn}

Let $W(n)$ denote the linear space of dimension $n^2$ of all
edge stresses $w_{i,j}$. Consider a framework $G(P)$ and denote
by $W(G,P)$ the subset of $W(n)$ of all possible self stresses
for $G(P)$. By definition the set $W(G,P)$ is a linear subspace
of $W(n)$.

\begin{defn}
The {\it configuration space of tensegrities} corresponding to the
graph $G$ is the set
$$
\Omega_d(G):=\big\{(G(P), w)\,|\,P\in (\r^d)^n, w \in
W(G,P)\big\}.
$$
The set $\{G(P)\,|\,P\in (\r^d)^n\}$ is said to be the {\it base
of the configuration space}, we denote it by $B_d(G)$.
\end{defn}

\subsection{Stratification of the base of a configuration space of tensegrities}

Suppose we have some framework $G(P)$ and we want to find a
cable-strut construction on it. Then {\it which edges can be
replaced by cables, and which by struts?} {\it What is the
geometric position of points for which given edges may be replaced
by cables and the others by struts?} These questions lead to the
following definition.

\begin{defn}
A linear fiber $W(G,P_1)$ is said to be {\it equivalent} to a
linear fiber $W(G,P_2)$ if there exists a homeomorphism $\xi$
between $W(G,P_1)$ and $W(G,P_2)$, such that for any self stress
$w$ in $W(G,P_1)$ the self stress $\xi(w)$ satisfies
$$
\sgn\big(\xi(w)\big)=\sgn\big(w\big).
$$
\end{defn}

The described equivalence relation gives us a stratification of
the base $B_d(G)=(\r^d)^n$. A {\it stratum} is by definition a
maximal connected set of points with equivalent linear fibers. In the
paper~\cite{DKS} we prove that all strata are semialgebraic sets
(which implies for instance that they are path connected).

The idea of this paper is to make the first steps in the study of
particular configuration spaces of tensegrities. We present the
techniques to find geometric conditions and open problems
for further study that already arise in very simple situations of
9 point configurations.

Let us, first, make the following three {\it general remarks}.

\vspace{1mm}

{\bf GR1.} The majority of the strata of codimension $k$ can be defined
by algebraic equations and inequalities that define the strata
of codimension~1. The exceptions here are mostly in high
codimension (the simplest one is as follows: for two points connected by an edge there is no codimension 1 stratum, but there is one
codimension 2 stratum corresponding to coinciding points; actually it is interesting to find the
complete list of such exceptions). So the most important case to
study is the codimension 1 case.

\vspace{1mm}

{\bf GR2.} A stratification of a subgraph is a substratification
of the original graph (i.e., each stratum for a subgraph is the union of
certain strata for the original graph), hence below we skip the description of
$B_2(G)$ for graphs with 5 vertices other than $K_5$.

\vspace{1mm}

{\bf GR3.} For any stratum there exists a certain subgraph
that {\it locally identifies} the stratum (i.e., for any point $x$
of the stratum there exists a neighborhood $B(x)$ such that any
configuration in $B(x)$ has a nonzero self stress for the
subgraph if and only if this point is on the stratum).

\vspace{1mm}

According to general remarks GR1 and GR2 the most interesting case
is to study the strata of codimension 1 for the complete graph on
$n$ vertices (denoted further by $K_n$). It is possible to find
some of the strata of $K_n$ directly. For the other strata one,
first, should find an appropriate subgraph that locally identifies
the stratum, and then find appropriate surgeries (explained in
Section~3) to reduce the complexity of the subgraph to find
geometric conditions.

{\bf This paper is organized as follows.} In Section~2 we study
the stratification of configuration spaces of tensegrities in the
plane with a small number of vertices. In Subsections~2.1 and~2.2
we briefly describe the trivial cases of two and three point
configurations. Further in Subsections~2.3 and~2.4 we study the
four and the five point cases. In each of the cases we describe
the geometry and the number of strata. In
addition we introduce the adjacency diagram of full dimension
and codimension 1 strata. In Subsections~2.5 and~2.6 we describe
geometric conditions for the codimension 1 strata of 6, 7, and 8
point tensegrities. In Section~3 we present the technique of
surgeries to find geometric descriptions for the strata. In
Subsection~3.1 we describe surgeries that do not change graphs,
and in Subsection~3.2 we show a couple of surgeries in the
two-dimensional case. We introduce a new three-dimensional surgery
in Subsection~3.3. In conclusion, we formulate several open
questions in Subsection~3.4.

\section{Stratification of the space $B_2(K_n)$ for small $n$}

In this section we study the geometry of tensegrity stratifications
for graphs with a small number of vertices. The cases of $n=2,3,4,5$
are studied in full detail. Starting from $n=6$ there are some
gaps in the understanding of tensegrities. Still for $n=6, 7, 8$ the
complete description of the geometric conditions for the strata is
known, we briefly describe several results on them here
(see~\cite{DKS} for more information).

\subsection{Case of two points}

Consider, first, the case of two points ($n=2$). There are only
two graphs on two points: a complete one $K_2$ and a graph without
edges (denote it by $G_{0,2}$).

All the fibers of the base $B_2(G_{0,2})=\r^4$ are of dimension 0,
and, therefore, they are equivalent. Hence the stratification is
trivial.

The complete graph $K_2$ here has only one edge. If two points of
the graph do not coincide then the stress at this edge should be
zero. When two points coincide then the stress at the edge can be
arbitrary, and we have a one-dimensional set of solutions (i.e.,
a fiber). So the base $B_2(K_2)=\r^4$ has a codimension 2 stratum (a
2-dimensional plane). The complement to this stratum is a stratum
of codimension 0.

\subsection{Three point configurations}

There are four different types of graphs here: let $G_{i,3}$ be
the graph with $i$ edges for $i=0,1,2,3$.

In cases $G_{0,3}$ and $G_{1,3}$ the base stratifications are the
following direct products:
$$
B_2(G_{0,3})=B_2(G_{0,2})\times \r^2 \quad \hbox{and} \quad
B_2(G_{1,3})=B_2(K_{2})\times \r^2.
$$
So $B_2(G_{0,3})$ is trivial and $B_2(G_{1,3})$ has a 4-dimensional subspace and its complement as strata.

\vspace{2mm}

The base $B_2(G_{2,3})$ contains five strata. One of them
corresponds to the configuration where three points coincide: the
fiber here is 2-dimensional, this stratum is isometric to $\r^2$.
There are three strata where one of the edges of the graph
vanishes: they are isometric to $\r^4\setminus \r^2$. Finally, the
complement to the union of these strata is the only stratum of
maximal dimension. There are no nonzero tensegrities for a
configuration in this stratum.

\vspace{2mm}

For the complete graph on three vertices
we have, for the first time, codimension 1 strata. There are three
codimension 1 strata, all of them correspond to the following
configuration: three points are in one line. Different strata
correspond to having a different point between the two others.

Let us briefly describe one of such strata. Let $P_i=(x_i,y_i)$ be
the points of the graph ($i=1,2,3$). Then the condition that the three
points are in a line is defined by a quadratic equation:
$$
(x_2-x_1)(y_3-y_1)-(x_3-x_1)(y_2-y_1)=0
$$
This quadric divides the space into two connected components: corresponding to
positively and negatively oriented triangles.

To sum up we present for $B_2(K_3)$ the following table.
\begin{center}
\begin{tabular}{|c|c|c|c|c|c|c|c|}
\hline
Dimension of a stratum &0&1&2&3&4&5&6\\
\hline
Number of such strata  &0&0&1&0&3&3&2\\
\hline
\end{tabular}
\end{center}

\subsection{Stratification of $B_2(K_4)$}

In this subsection we restrict ourselves to the complete graph
$K_4$ (for its subgraphs we apply the reasoning of GR2 above). A plane
configuration of four points in general position admits a unique
tensegrity (up to a multiplicative constant), which is called an
{\it atom}. In~\cite{Guz1} it was proved that any self stress for
$K_n$ is a sum of self-stressed atoms in $K_n$ (i.e., a sum of
certain $K_4\subset K_n$ with scalars). For $K_4$ there are exactly 14
strata of general position.

The strata of codimension 1 correspond to three of four points of
the graph lying in a line. Actually in this case there is no jump of
dimension of the fiber: there is also a unique (up to scalar)
solution corresponding to the three points in a line. But the
stresses on the edges from the fourth point are all zero, and hence
a fiber of this stratum is not equivalent to general
fibers. The number of such strata is 24.

In codimension 2 we have two different types of strata
corresponding to
\begin{itemize}
\item{four points in a line: the dimension of a fiber is 2 (twelve strata);}
\item{two points coincide: the dimension of a fiber is 1 (twelve strata).}
\end{itemize}

In codimension 3 there is one type of strata with configurations
of four points in a line, two of which coincide. Six of them with
the double point in the middle and twelve of them with the double
point not in the middle.

In codimension 4, there are two types of strata:
\begin{itemize}
\item{three points coincide (4 strata);}
\item{two pairs of points coincide (3 strata).}
\end{itemize}

And, finally, there is a codimension 6 stratum when all four
points coincide. We remark that for none of the strata the fiber
is 3-dimensional.

The cardinalities of strata are shown in the following table.
\begin{center}
\begin{tabular}{|c|c|c|c|c|c|c|c|c|c|}
\hline
Dimension of a stratum &0&1&2&3&4&5&6&7&8\\
\hline
Number of strata  &0&0&1&0&7&18&24&24&14\\
\hline
\end{tabular}
\end{center}

\subsubsection{The space of formal configurations}

Let us draw schematically the adjacency of the strata of maximal
dimension via strata of codimension~1.
The dimension of the
stratification space is 8,
let us reduce it to two via factoring
by proper affine transformations. We will use the following
simple proposition.

\begin{prop}\label{aff}
Invertible affine transformations of the plane do not change the equivalence
class of a fiber $W(G,P)$. In other words if $P$ is a
configuration and $T$ an invertible affine transformation of the plane then
$$
W(G,P)\simeq W(G,T(P)).
$$
 \qed
\end{prop}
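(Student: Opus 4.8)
The plan is to exhibit an explicit bijection between the self-stress spaces $W(G,P)$ and $W(G,T(P))$ and check that it preserves signs edge by edge. Write $T(x) = Ax + b$ where $A \in GL_2(\r)$. The key observation is that for any edge $p_ip_j$, the displacement vector transforms linearly: $T(p_j) - T(p_i) = A(p_j - p_i)$, since the translation part $b$ cancels. So the equilibrium condition at vertex $i$ for the configuration $T(P)$ reads
$$
\sum_{\{j \mid j \ne i\}} w_{i,j}\big(T(p_j) - T(p_i)\big) = A\!\left(\sum_{\{j \mid j \ne i\}} w_{i,j}(p_j - p_i)\right),
$$
and since $A$ is invertible, the left-hand side vanishes if and only if the bracketed sum vanishes, i.e. if and only if $w$ is a self stress for $G(P)$. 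Hence the \emph{same} collection of numbers $(w_{i,j})$ that is a self stress for $G(P)$ is a self stress for $G(T(P))$, and vice versa.

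First I would record that this shows $W(G,P)$ and $W(G,T(P))$ are literally the same linear subspace of $W(n)$. Then the required homeomorphism $\xi$ in the definition of equivalence is simply the identity map $\xi = \mathrm{id}$: it is trivially a homeomorphism, and it trivially satisfies $\sgn(\xi(w)) = \sgn(w)$ for every $w$, since it leaves each edge stress $w_{i,j}$ unchanged. Therefore $W(G,P) \simeq W(G,T(P))$, which is exactly the claim.

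There is essentially no obstacle here; the only thing to be a little careful about is the logical direction — one must use invertibility of $A$ for the ``only if'' direction of the equivalence of the equilibrium conditions (the kernel of $A$ being trivial), and for the fact that $\xi$ is a homeomorphism rather than merely a continuous map. One could optionally remark that a version of this statement holds with the identity replaced by any map induced by a \emph{positive} scalar rescaling of stresses, but for affine transformations of the configuration the identity already does the job. This is why the proposition is stated with ``\qed'' and no proof in the excerpt: the argument is the one-line computation $T(p_j) - T(p_i) = A(p_j - p_i)$ together with $A$ invertible.
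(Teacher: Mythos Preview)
Your argument is correct and is exactly the routine computation the paper suppresses behind the bare ``\qed'': since $T(p_j)-T(p_i)=A(p_j-p_i)$ with $A$ invertible, the self-stress spaces $W(G,P)$ and $W(G,T(P))$ coincide as subspaces of $W(n)$, and the identity map is the required sign-preserving homeomorphism. There is nothing to add.
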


So instead of studying the stratification itself we restrict to
the set of formal configurations with respect to proper affine
transformations of the plane.

\begin{defn}
We say that a four point configuration $v_1,v_2,v_3,v_4$ is {\it
formal} in one of the following cases:

{\it i$)$} nondegenerate case: {\it a configuration $P_{x,y,+}$
with vertices $v_1=(0,0)$, $v_2=(1,0)$, $v_3=(x,y)$,
$v_4=(x,y{+}1)$ for arbitrary $(x,y)$}.

{\it ii$)$} nondegenerate case: {\it a configuration $P_{x,y,-}$
with $v_1=(0,0)$, $v_2=(1,0)$, $v_3=(x,y)$, $v_4=(x,y{-}1)$ for
arbitrary $(x,y)$}.

{\it iii$)$} degenerate case: {\it a configuration $P_{\Delta,+}$
with $v_1=(0,0)$, $v_2=(1,0)$, $v_3=(0,1)$, $v_4=(\Delta,1)$ for
an arbitrary $\Delta$}.

{\it iv$)$} degenerate case: {\it a configuration $P_{\Delta,-}$
with $v_1=(0,0)$, $v_2=(1,0)$, $v_3=(0,-1)$, $v_4=(\Delta,-1)$ for
an arbitrary $\Delta$}.

{\it v$)$} closure: we add two formal configurations $P_{\pm
\infty}$ with vertices $v_1=(0,0)$, $v_2=(1,0)$, $v_3=(1,0)$,
$v_4=(1,\pm\infty)$.

We denote the set of all formal configurations by $\Lambda_4$.
\end{defn}

In some sense the space $\Lambda_4$ is the space of all
codimension~0 and codimension~1 configurations factored by the
group of proper affine transformations.

\begin{prop}
For any codimension~0 and codimension~1 configuration there exists
a unique formal configuration to which the first configuration can
be affinely deformed. \qed
\end{prop}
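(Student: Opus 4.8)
The plan is to show that every four-point configuration of codimension at most~1 can be carried by a proper (orientation-preserving) affine transformation of the plane onto exactly one of the formal configurations listed in the definition of $\Lambda_4$. First I would recall that, by Proposition~\ref{aff}, the equivalence class of the fiber is unchanged under invertible affine maps, and that codimension~0 means the four points are in general position (no three collinear, no two equal) while codimension~1 means exactly three of the four points lie on a line (with no coincidences), since these are precisely the strata enumerated in the preceding subsection. So the task is purely one of finding affine normal forms.

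For the nondegenerate case I would argue as follows. Relabel so that $v_1,v_2,v_3$ are the three vertices not forced to be collinear; since in codimension~0 no three are collinear and in codimension~1 the collinear triple is $v_1,v_3,v_4$ say, we may choose three affinely independent vertices among the four — in codimension~0 any three work, in codimension~1 we avoid the collinear triple. Applying the unique affine map sending the first of these to $(0,0)$ and the second to $(1,0)$, the third goes to some point $(x,y)$ with $y\neq 0$; composing with the reflection $(u,v)\mapsto(u,-v)$ if necessary (this is orientation-reversing, but it is still an invertible affine map, and Proposition~\ref{aff} only needs invertibility), we can assume $y>0$ — this is where the $\pm$ bookkeeping enters and is the one genuinely delicate point: I must check that the sign choice is forced and consistent, i.e.\ that $P_{x,y,+}$ and $P_{x,y,-}$ are genuinely distinct formal configurations and that no single configuration is affinely equivalent to both. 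Finally the fourth point is the unique one completing the picture; a shear fixing $v_1,v_2$ and a vertical scaling normalize it to $(x,y+1)$ (cases i, ii). In the degenerate (codimension~1) case, the three collinear points can be sent to $(0,0),(1,0)$ and the line they span is the $x$-axis; a shear and scaling put them in the configuration with $v_3=(0,\pm1)$ and $v_4=(\Delta,\pm1)$, giving cases iii, iv.

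The hard part will be the uniqueness claim: I must verify that the affine normal form is rigid, i.e.\ that the only affine map fixing the normalization data $v_1=(0,0)$, $v_2=(1,0)$ is the identity on the relevant configuration, so that the resulting parameters $(x,y)$ (resp.\ $\Delta$) and the sign are well-defined invariants of the affine-equivalence class. For the nondegenerate case this follows because $v_1,v_2,v_3$ are affinely independent and an affine map of the plane is determined by its values on three affinely independent points, so once $v_1,v_2$ are pinned and $y$ is required positive, the image of $v_3$ is determined, hence so is the map; for the degenerate case one uses $v_1,v_2$ together with the requirement that the off-line point have second coordinate exactly $\pm1$. One must also check the boundary/closure clause: the configurations $P_{\pm\infty}$ are added precisely so that $\Lambda_4$ is closed, but they are not themselves images of finite configurations, so the "existence and uniqueness" assertion is understood to range over the honest codimension~0 and codimension~1 configurations, and I would note this explicitly. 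Assembling these observations — affine-independence to get existence, rigidity of the normalization to get uniqueness, and the reflection trick to pin down the sign — completes the proof.
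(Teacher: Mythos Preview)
The paper offers no proof of this proposition (it is marked \qed\ and left to the reader), so there is nothing to compare against; but your proposal contains a genuine misreading of the definition of $\Lambda_4$ that makes the argument incorrect as written.

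You identify the ``nondegenerate'' cases (i)--(ii) with codimension~0 and the ``degenerate'' cases (iii)--(iv) with codimension~1. That is not what the paper means. The split is governed by whether the segment $v_3v_4$ is parallel to $v_1v_2$: generically $\{v_2-v_1,\,v_4-v_3\}$ is a basis of $\mathbb R^2$, and the unique linear map sending this basis to $\{(1,0),(0,\pm 1)\}$ (sign chosen so the map is orientation-preserving), followed by the translation $v_1\mapsto(0,0)$, yields $P_{x,y,\pm}$. The ``degenerate'' case is precisely $v_4-v_3\parallel v_2-v_1$, where this basis collapses and one instead normalises by sending $v_1,v_2,v_3$ to $(0,0),(1,0),(0,\pm1)$; the image of $v_4$ is then forced to lie on the line $y=\pm1$, giving $P_{\Delta,\pm}$. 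As Remark~\ref{color1} spells out, this degenerate locus is the dashed equator in Figure~\ref{stratification4D}, which is \emph{not} a stratum at all; the codimension~1 strata (three points collinear) sit inside cases (i)--(ii) at $y=0$, $y=\mp1$, $x=0$, $x=1$. So your treatment of the degenerate case as ``send the collinear triple to the $x$-axis'' produces configurations that are not in $\Lambda_4$.

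Two further corrections. First, there is no ``unique affine map sending the first of these to $(0,0)$ and the second to $(1,0)$'': the stabiliser of two points is the $2$-parameter family $(u,v)\mapsto(u+av,bv)$, and it is exactly this residual freedom that is spent aligning $v_4-v_3$ vertically with unit length. Second, your reflection step is illegitimate: the quotient is by \emph{proper} affine transformations, and the sign $\pm$ records the orientation of the ordered pair $(v_2-v_1,\,v_4-v_3)$, which is a genuine invariant separating the two hemispheres of $\Lambda_4$. If reflections were allowed, $P_{x,y,+}$ and $P_{x,y,-}$ would coincide and uniqueness would fail.
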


The space $\Lambda_4$ is endowed with a natural topology of a quotient space.

\begin{prop}
There is a natural topology of a unit sphere $S^2$ for the set
$\Lambda_4$.
\end{prop}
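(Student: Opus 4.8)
The plan is to exhibit an explicit homeomorphism from $\Lambda_4$ to the unit sphere $S^2$, built from the defining parametrisations in the definition of formal configurations. First I would observe that the two nondegenerate families $P_{x,y,+}$ and $P_{x,y,-}$ are each parametrised by a plane $\r^2$ via $(x,y)$; think of these as two copies of $\r^2$. Each such copy compactifies to a closed disc (or equivalently to a hemisphere) by adding its circle at infinity. The degenerate configurations $P_{\Delta,\pm}$, parametrised by $\Delta\in\r$, together with the two closure points $P_{\pm\infty}$, should be identified with the boundary circles: as $(x,y)$ escapes to infinity in the $+$ family along a direction, the four points degenerate, and one checks directly from the coordinates that the limiting affine-equivalence class is exactly one of the degenerate configurations $P_{\Delta,+}$ (for generic directions) or $P_{+\infty}$ (for the two exceptional directions where $v_3$ and $v_4$ collide with $v_2$). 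So the picture is two discs glued along a common boundary circle, which is $S^2$.

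The key steps, in order, are: (1) Fix charts. On the $+$ hemisphere use $(x,y)\in\r^2$; pass to polar-type coordinates and add the circle $\{r=\infty\}$, giving a closed disc $D_+$; similarly $D_-$. Identify the interiors of $D_+$ and $D_-$ with the nondegenerate formal configurations. (2) Compute the boundary behaviour. For a ray $(x,y)=t(\cos\theta,\sin\theta)$ with $t\to\infty$, apply an affine normalisation (rescale so that, say, $v_3$ and $v_4$ stay at bounded mutual distance and $v_1$ stays at the origin) and read off the limiting configuration; match it against the list $P_{\Delta,+}$, $P_{+\infty}$ by solving for $\Delta$ in terms of $\theta$. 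This yields a homeomorphism $\partial D_+\to\{P_{\Delta,+}\}\cup\{P_{+\infty}\}$ and likewise for $\partial D_-$. (3) Check the gluing. The set $\{P_{\Delta,+}\}$ and $\{P_{\Delta,-}\}$ are the \emph{same} subset of codimension-1 formal configurations (a degenerate four-point line configuration is reached as a limit from both sides), so $\partial D_+$ and $\partial D_-$ are the same circle inside $\Lambda_4$; gluing two discs along their boundary circle produces $S^2$. (4) Verify that this bijection is a homeomorphism for the quotient topology on $\Lambda_4$: continuity in both directions follows because the normalising affine transformations can be chosen to depend continuously on the parameters away from the collision directions, and near the collision directions the explicit formulas show the map extends continuously to $P_{\pm\infty}$.

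I expect the main obstacle to be step (2)–(3): correctly matching the circle at infinity of each plane with the one-parameter family $\{P_{\Delta,\pm}\}$ plus the single closure point, i.e. verifying that the map $\theta\mapsto\Delta(\theta)$ is a continuous bijection from the circle minus the two collision angles onto $\r$, and that both collision angles on a given hemisphere limit to the \emph{same} closure point $P_{+\infty}$ (resp.\ $P_{-\infty}$). This is where one has to be careful about which affine normalisation is used and about the fact that a line configuration with a marked "middle" point is recorded only up to affine type, so different $\theta$ can give the same $\Delta$ unless the normalisation is chosen to break that symmetry. Once the boundary identification is pinned down, the rest is the standard fact that $D^2\cup_{S^1}D^2\cong S^2$ together with a routine check of continuity of the quotient maps, which I would not grind through in detail.
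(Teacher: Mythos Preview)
Your overall architecture matches the paper's: map the two nondegenerate families $P_{x,y,\pm}$ to the open upper and lower hemispheres (the paper does this by central projection of the planes $z=\pm 1$ from the origin) and fill in the equator with the degenerate configurations. However, your boundary identification in steps (2)--(3) is wrong, and this is where the argument breaks.

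First, $\{P_{\Delta,+}\}$ and $\{P_{\Delta,-}\}$ are \emph{not} the same subset of $\Lambda_4$. The configurations $P_{\Delta,+}=\big((0,0),(1,0),(0,1),(\Delta,1)\big)$ and $P_{\Delta',-}=\big((0,0),(1,0),(0,-1),(\Delta',-1)\big)$ are never related by a \emph{proper} affine map (any linear map fixing $(1,0)$ and sending $(0,1)$ to $(0,-1)$ reverses orientation), and the previous proposition guarantees uniqueness of the formal representative. So the equator is not one family but the disjoint union of both one-parameter families, each occupying an open half of the circle, with the two points $P_{+\infty}$ and $P_{-\infty}$ separating them. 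Second, and correspondingly, the boundary of \emph{each} disc $D_\pm$ is the \emph{entire} equator, not just one of the two families plus a single closure point. Concretely, in the $+$ family with $y>0$ one normalises $v_1,v_2,v_3$ to $(0,0),(1,0),(0,1)$ and finds $P_{x,y,+}\to P_{-\Delta,+}$ as $x/y\to\Delta$; but for $y<0$ the orientation-preserving normalisation sends $v_3$ to $(0,-1)$ instead, and then $P_{x,y,+}\to P_{-\Delta,-}$. Thus directions with $y>0$ limit to the $+$ arc, directions with $y<0$ limit to the $-$ arc, and the two horizontal directions give $P_{+\infty}$ and $P_{-\infty}$. (Also, the degenerate $P_{\Delta,\pm}$ are not ``line configurations'': $v_1v_2$ and $v_3v_4$ lie on \emph{parallel} lines, so your description of the limiting type is off.)

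Once you correct the equator to carry both families and both closure points, and check the two normalisations above (one for $y>0$, one for $y<0$) to see that each open hemisphere really limits onto the full circle, the gluing $D_+\cup_{S^1}D_-\cong S^2$ goes through exactly as you say. The paper records precisely this: $P_{\Delta,+}$ sits at angle $\pi-\operatorname{arccotan}\Delta$ on the upper semicircle, $P_{\Delta,-}$ at angle $-\operatorname{arccotan}\Delta$ on the lower semicircle, and $P_{\pm\infty}$ at $(\pm 1,0,0)$.
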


\begin{proof}
Let us introduce a topology of a unit sphere $S^2$ for
$\Lambda_4$. Consider the configurations of case {\it i}) on
the plane $z=1$: we identify the point $P_{x,y,+}$ with
the point $(x,y,1)$. Consider the projection of this plane to
the upper unit hemisphere $S^2$ from the origin. So we have a one
to one correspondence between the configurations of case {\it i}) and the upper hemisphere.

Similarly we take the plane $z=-1$ for the case {\it ii})
identifying the point $(-x,-y,-1)$ with the configuration
$P_{x,y,-}$ and projecting it to the lower hemisphere.

For the equator of the unit sphere we use all the other cases as
asymptotic directions. First, we associate the configuration
$P_{\Delta,+}$ with the point
$$
(\cos(\pi-\arccotan \Delta),\sin(\pi-\arccotan \Delta),0).
$$
Let us explain the topology at one of such points of the equator.
Suppose we start with $P_{x,y,+}$. The transformation sending the first three points to $(0,0)$, $(1,0)$, and $(0,1)$ is linear with
matrix
$$
\left(
\begin{array}{cc}
1&-x/y\\
0&1/y\\
\end{array}
\right).
$$
Then the image of the fourth point of $P_{x,y,+}$ is
$(-x/y,1{+}1/y)$. While $x$ tends to infinity and $x/y$ tends to
$\Delta$ the last point tends to $(-\Delta,1)$, and hence the
configuration $P_{x,y,+}$ tends to $P_{-\Delta,+}$, as in the
above formula.

Secondly, we associate $P_{\Delta,-}$ with the point
$$
(\cos(-\arccotan \Delta),\sin(-\arccotan \Delta),0)
$$
in a similar way.

Finally, we glue $P_{+\infty}$ and $P_{-\infty}$  to the points
$(1,0,0)$ and $(-1,0,0)$ respectively.
\end{proof}

So, the codimension 0 and 1 stratification of $B_2(K_4)$ can be
derived from the stratification of the sphere. We show the
stereographic projection of $\Lambda_4$ from the point $(0,0,-1)$
to the plane $z = 1$ on Figure~\ref{stratification4D}. There are
four types of strata of codimension~1, they correspond to the fact
that certain three points are in a line. They separate the plane
into 14 connected components. In each of the connected components
we draw a typical type of configuration: $(v_1,v_2,v_3,v_4)$. Here
$v_1$ is blue, $v_2$ is purple, $v_3$ is red and $v_4$ is green.

\begin{figure}
$$\epsfbox{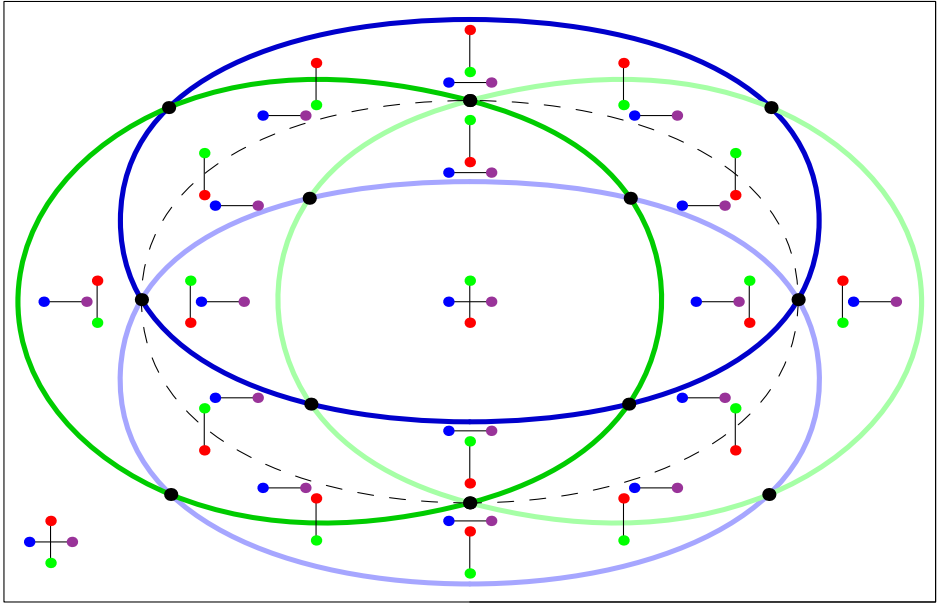}$$
\caption{Stratification of $B_2(K_4)$.}\label{stratification4D}
\end{figure}

\begin{rem}\label{color1} Different geometric conditions are represented
by different colors in the picture, the correspondence is as
follows.
\begin{itemize}
\item
{Light blue strata (6 strata forming a circle) correspond to
configurations with $v_1$, $v_2$, and $v_3$ in a line.}
\item{Dark blue strata (6 strata) contain configurations with $v_1$,
$v_2$, and $v_4$ in a line.}
\item{Light green strata (6 strata) contain configurations with $v_1$,
$v_3$, and $v_4$ in a line.}
\item{Dark green strata (6 strata)
correspond to configurations with $v_2$, $v_3$, and $v_4$ in a
line. We have 24 strata of codimension 1 in total.}

\item{The dashed black line is the projection of the equator. It
corresponds to the degenerate case of parallel segments. The
dashed line is not a stratum, it has the same fiber as all the
points in its neighborhood. While one passes the dashed line the
red-green segment ''rotates'' around the blue-purple segment.}
\end{itemize}
\end{rem}

\subsection{Stratification of $B_2(K_5)$}

\subsubsection{General description of the strata}

We have 264 strata of general position.

As in the two previous cases the strata of codimension 1 correspond to
three points of the graph lying in a line. The number of such
strata is 600.

The following strata are of codimension 2:
\begin{itemize}
\item{twice three points in a line: 270 strata;}
\item{four points in a line: 120 strata;}
\item{two points coincide: 420 strata.}
\end{itemize}

In codimension 3 we have the following cases:
\begin{itemize}
\item{ three points in a line and one double point: 60 strata;}
\item{four points in a line two of which coincide: 180 strata;}
\item{five points in a line: 60 strata.}
\end{itemize}

For codimension 4 we have the following list:
\begin{itemize}
\item{one triple point: 20 strata;}
\item{five points in a line two of which coincide: 120 strata;}
\item{two double points: 30 strata.}
\end{itemize}

In codimension 5 we get:
\begin{itemize}
\item{five points in a line three of which coincide: 30 strata;}
\item{five points in a line with two pairs of points coinciding: 45 strata.}
\end{itemize}

In codimension 6 there are the following strata:
\begin{itemize}
\item{a triple point and a double point: 10 strata;}
\item{one point and one point of multiplicity four: 5 strata.}
\end{itemize}

And, finally, there is a codimension 8 stratum when all five
points coincide.

The cardinalities of the strata are shown in the following table.
\begin{center}
\begin{tabular}{|c|c|c|c|c|c|c|c|c|c|c|c|}
\hline
Dimension of a stratum &0&1&2&3&4&5&6&7&8&9&10\\
\hline
Number of strata  &0&0&1&0&15&75&170&300&810&600&264\\
\hline
\end{tabular}
\end{center}

\subsubsection{Visualization of $B_2(K_5)$}

Let us now describe the structure of the stratification $B_2(K_5)$.
Like in case of $B_2(K_4)$ we introduce a set $\Lambda_5$ which
represents the adjacency of strata of full dimension and of
codimension 1. By definition we put
$$
\Lambda_5=\Lambda_4\times \r^2,
$$
i.e., we consider all the four point configurations of
$\Lambda_4$, and to each configuration we add the fifth point. We
take the product topology for $\Lambda_5$.

So at each point of $\Lambda_4$ we attach an $\r^2$-fiber. It will soon become clear that for any full dimension stratum of $\Lambda_4$ the
corresponding fibration is trivial, but the adjacency is not.

On Figures~\ref{B1K3_1} and~\ref{B1K3_2} we show $\Lambda_5$ in
the following way. We draw the stratification of $\Lambda_4$ and
inside each connected component we show the typical fiber of the
component. The first four points are represented by purple, blue,
green, and red points. The lines passing through any pair of them
divide the fiber into 18 connected components, that correspond to
strata of full dimension. At each such component we write a
letter of the Latin alphabet (we consider capital and small letters as
distinct).

\begin{itemize}
\item{Two regions denoted by the same letter and lying in neighboring
connected components of $\Lambda_4$ separated by light red, dark
red, and black strata are in the same stratum.}

\item{Two regions denoted by the same letter and lying in neighboring
connected components of $\Lambda_4$ separated by light blue, dark
blue, light green, and dark green strata are in distinct strata
which are adjacent to the same codimension 1 stratum.}

\item{Two regions denoted by a distinct letter and lying in neighboring
connected components of $\Lambda_4$ are not in one stratum and are not
adjacent to the same codimension 1 stratum.}
\end{itemize}

The light blue, dark blue, light green, and dark green strata
represent the same geometric conditions as in Remark~\ref{color1}
above. For the remaining strata we have:
\begin{itemize}
\item{The dark red stratum symbolizes that the line through the red and blue points
is parallel to the line through the green and purple points.}
\item{The light red stratum symbolizes that the line through the red and purple points
is parallel to the line through the green and blue points.}
\item{The black stratum symbolizes that the line through the red and green points
is parallel to the line through the purple and blue points.}
\end{itemize}

\begin{rem}
The configuration space $B_2(K_5)$ has several obvious symmetries.
First, there is the group of permutations $S_5$ that acts on the
points of $B_2(K_5)$; these symmetries are hardly seen from
Figures~\ref{B1K3_1} and~\ref{B1K3_2} since the representation is
not $S_5$-symmetric. Secondly, there is a symmetry about the
origin that sends configurations from $B_2(K_5)$ to themselves, on
Figures~\ref{B1K3_1} and~\ref{B1K3_2} we used capital and small
letters to indicate this symmetry (for instance, the strata of
''a'' contain centrally symmetric configurations to the
configurations of the strata ''A'').
\end{rem}

\begin{figure}
$$\epsfbox{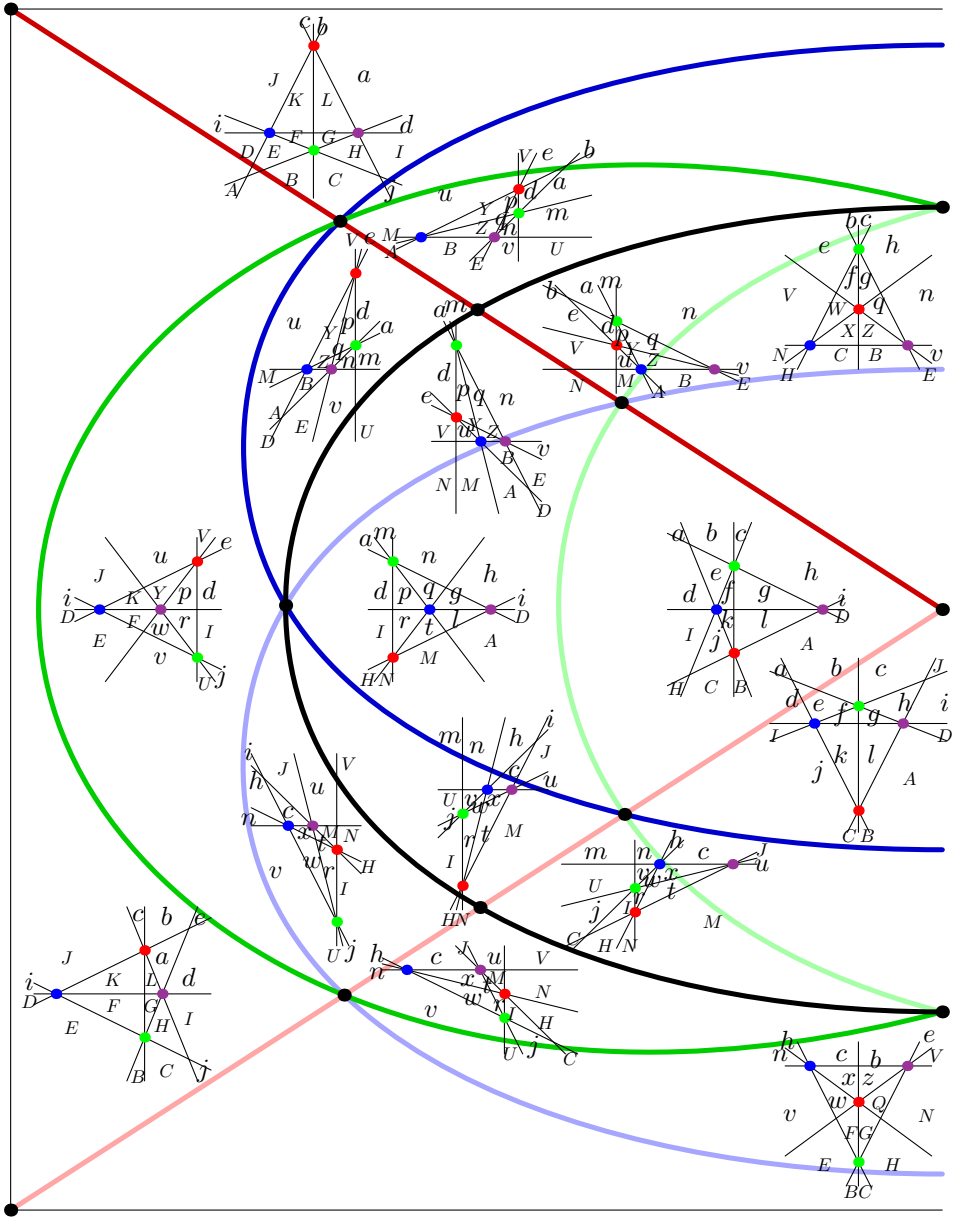}$$
\caption{Stratification of $B_2(K_5)$  (Left part).}\label{B1K3_1}
\end{figure}

\begin{figure}
$$\epsfbox{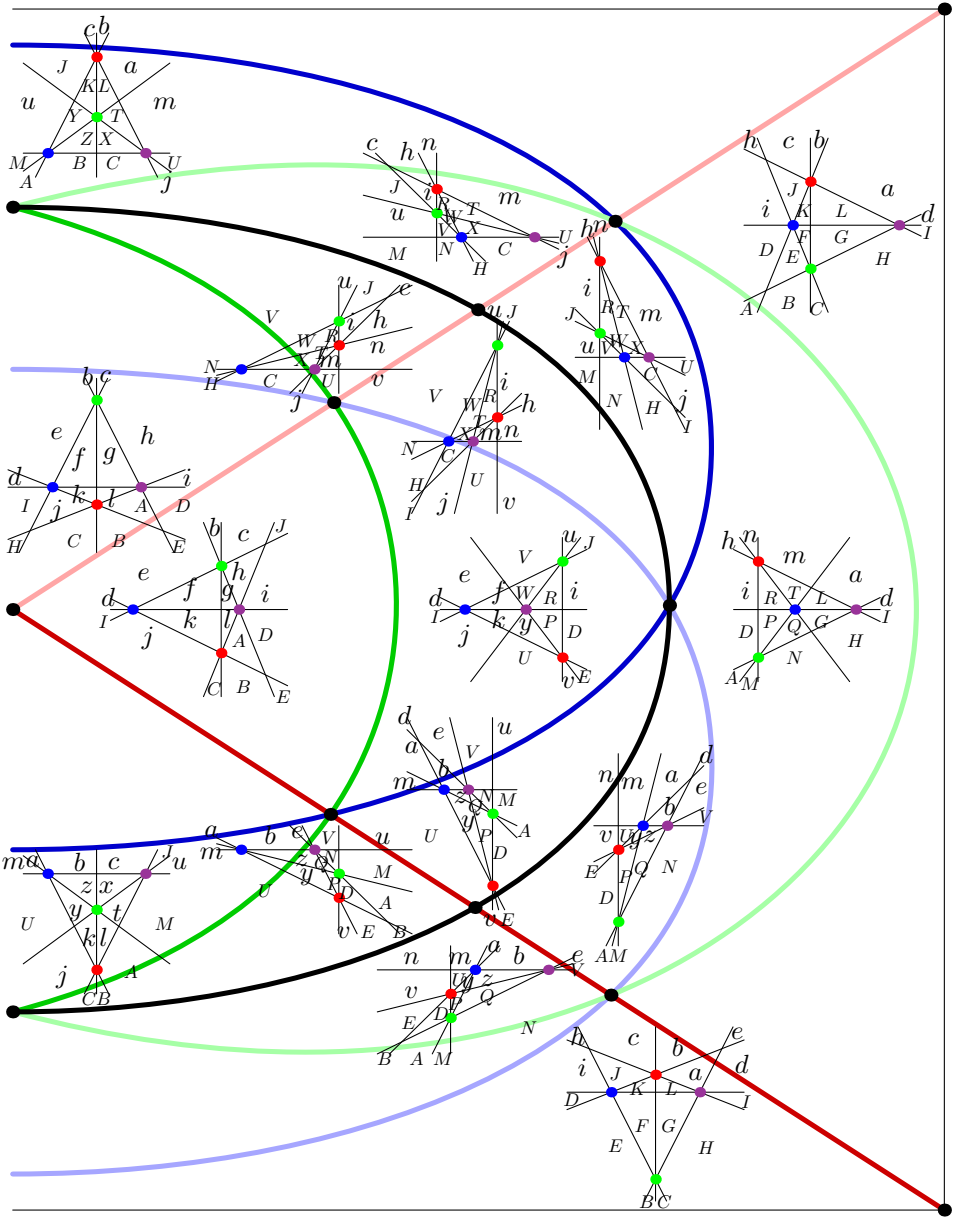}$$
\caption{Stratification of $B_2(K_5)$  (Right
part).}\label{B1K3_2}
\end{figure}

As in the case of 4 point configurations we skip the subgraphs of
$K_5$, see the second general remark above (GR2).

\subsection{Essentially new strata in $B_2(K_6)$}

The stratification of $B_2(K_6)$ is much more complicated, at this
moment we do not even know how many strata of distinct dimension are
present in the stratification.

According to GR1 the first step in studying the stratification of
$B_2(K_6)$ is to study all possible distinct types of strata of
codimension~1. In the examples of $K_n$ for $n<6$ we only have strata corresponding to the following geometric condition: three points
are in a line. For the case of 6 points we get two additional
types of strata: six points on a conic, and three lines passing through
three pairs of points have a unique point of intersection.

So the following are three codimension~1 strata (appeared
in~\cite{WW} by N.~L.~White and W.~Whiteley):
\begin{itemize}
\item{three points in a line;}

\item{the lines $v_1v_2$, $v_3v_4$, and $v_5v_6$ meet in one point (or all parallel);}

\item{all the six points are on a conic.}
\end{itemize}

We conclude this subsection with the following problems.
\begin{prob}
Find a description of $B_2(K_6), B_3(K_4)$ and $B_3(K_5)$ similar to the ones for $B_2(K_4)$ and
$B_2(K_5)$ shown in the previous subsections.
\end{prob}
\subsection{A few words about the case $n>6$}

In~\cite{DKS} we have studied strata of the 7 and 8 point
configurations. There are 4 distinct types of codimension 1 strata for 7 points
and 17 types for 8 points.

The 4 types of codimension 1 strata for 7 points are defined by
the following geometric conditions:
\begin{itemize}
\item{three points in a line;}

\item{the lines $v_1v_2$, $v_3v_4$, and $v_5v_6$ meet in one point (or all parallel);}

\item{the lines $v_1v_2$, $v_3v_4$, and $v_5p$ (where
$p$ is the intersection of the lines $v_2v_6$ and $v_3v_7$) have a
common nonempty intersection;}

\item{the six points $v_1$, $v_2$, $v_3$, $v_4$, $v_5$,
and $p$ (where $p$ is the intersection of the lines $v_1v_6$ and
$v_3v_7$) are on a conic.}
\end{itemize}

For the list of strata of 8 point configurations we refer
to~\cite{DKS}.

It turns out that the geometric conditions of any codimension 1
stratum can be obtained by the following procedure. Consider the
points of configuration $P$; for each two pairs of points
$(v_i,v_j)$ and $(v_k,v_l)$ of this configuration consider the
point of intersection of the lines $v_iv_j$ and $v_kv_l$. This
leads to a bigger configuration of points including $P$ and the above
intersections, we denote it by $U(P)$. This operation can be
iteratively applied infinitely many times, which results in a {\it
universal set}
$$
U^\infty(P)=\bigcup\limits_{m=0}^{\infty} U^m(P).
$$
Any condition for a codimension~1 stratum is always as follows:
{\it three certain points of $U^\infty(P)$ are in a line} (for the
details, see for instance~\cite{Rot1} and~\cite{DKS}).

\begin{exmp}
The condition {\it the lines $v_1v_2$, $v_3v_4$, and $v_5v_6$ meet
in one point} in terms of points of $U^1(P)=U(P)$ is as follows.
{\it The points $v_1$, $v_2$, and $p=v_3v_4\cap v_5v_6$ are in a
line}.
\end{exmp}

\begin{rem}
For simplicity reasons we omit discussions of cases where certain
lines $v_iv_j$ and $v_kv_l$ are parallel, due to the fact that
this situation is never generic for codimension 1 strata. In
general one may think that if the lines $v_iv_j$ and $v_kv_l$ are
parallel, then their intersection point is in the line at infinity
in the projectivization of $\r^2$.
\end{rem}

\begin{rem}\label{conic}
At first glance, the condition {\it six points are on a conic} is
of different nature. Nevertheless, it is a relation on the points
of the configuration in $U^1(P)$ described by Pascal's theorem:
{\it The intersections of the extended opposite sides of a hexagon
inscribed in a conic lie on the Pascal line.} See also
Example~\ref{conic2} below.
\end{rem}

\begin{prob}
Describe all the possible different types of strata for 9 points.
\end{prob}

\begin{prob}
How to calculate the number of different types of strata for $n$
points with arbitrary $n$?
\end{prob}

It is also interesting to have an answer for the following question:
{\it how many iterations $($i.e., find the minimal $m$ for
${U^m(P)}$$)$ does one need to perform to describe all conditions for
the codimension 1 strata of $n$-point configurations $P$?}
\begin{prob}
Which configurations of $U^m(P)$ define the same geometric
condition?
\end{prob}
This problem is a kind of question of finding generators and
relations for the set of all conditions. Let us show one type of
such ''relations'' in the following example.

\begin{exmp}\label{conic2}
Consider the condition: six points $v_1,v_2,\ldots,v_6$ are on a
conic. This condition is described by configurations contained in
$U^1(P)$ via Pascal's theorem:
\begin{center}
The points $p,q,r$ are in a line for \quad $
\left\{\begin{array}{l}
p=v_{\sigma(1)}v_{\sigma(2)}\cap v_{\sigma(4)}v_{\sigma(5)}\\
q=v_{\sigma(2)}v_{\sigma(3)}\cap v_{\sigma(5)}v_{\sigma(6)}\\
r=v_{\sigma(3)}v_{\sigma(4)}\cap v_{\sigma(6)}v_{\sigma(1)}\\
\end{array}
\right.,
$
\end{center}
where $\sigma$ is an arbitrary permutation of the set of six
elements. So, there are 60 different configurations of $U^1(P)$
defining the same geometric condition.
\end{exmp}

\section{Further study of strata: surgeries}
We now look into subgraphs contained in a particular stratum and ask the
basic question on the dimension of the fiber.

Even graphs of very low connectivity admit non-zero tensegrities, for
disconnected or one-connected graphs we may simply examine the
connected or 2-connected components.
Also 2-connected graphs may be decomposed via the 2-sum, see~\cite{2-sum}: Consider
graphs $G_1$ and $G_2$, their configurations $P_1$ and $P_2$ admitting tensegrities with
$p_1 q_1$ a cable in $G_1(P_1)$ and $p_2 q_2$ a strut in $G_2(P_2)$. We form the {\it 2-sum}
$G_1 \bigoplus G_2$ by identifying $p_1$ with $p_2$ and $q_1$ with $q_2$ and removing the
identified edge. We can inherit a configuration $P$ from $P_1$ and $P_2$ by fixing $P_1$ and
properly dilating, rotating and translating $P_2$. It is clear that
$$
\dim W(G_1 \bigoplus G_2 ,P) =\dim W(G_1 ,P_1 ) + \dim W(G_2 , P_2 ) -1.
$$

Since 2-sum decomposition is canonical, we can describe geometric conditions
for 2-connected graphs by geometric conditions on their 3-blocks. For example
the geometric condition for $G$ in Figure~\ref{2sumfig} is that the lines
$v_1v_2$, $v_3v_4$, and $v_5v_6$ meet in one point.

\begin{figure}[htb]
\centering
\epsfbox{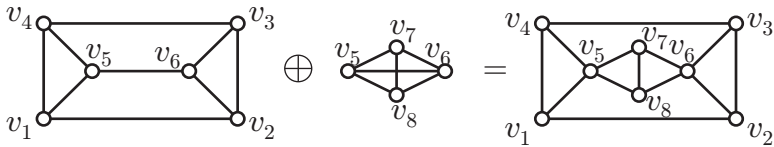}
\caption{The 2-sum of a triangular prism with $K_4$\label{2sumfig}}
\end{figure}

\subsection{Subgraphs related to codimension 1 strata}

As we have already mentioned in GR3, for any codimension 1 stratum
there exists at least one subgraph of $K_n$ that generically does
not admit tensegrities but at this stratum admits a
one-dimensional family of tensegrities. Let us show such subgraphs
for the codimension one strata of $B_2(K_6)$ and $B_2(K_7)$.

\begin{exmp}
In the case of $K_6$ we have three strata of
different geometrical nature. The first triangular subgraph
(Figure~\ref{strata6}, left) is related to the strata with three
points in a line. The second
(Figure~\ref{strata6}, middle) corresponds to the strata  whose
three pairs of points generate lines passing through one point.
The last one (Figure~\ref{strata6}, right) corresponds to the
configurations of six points on a conic.

\begin{figure}
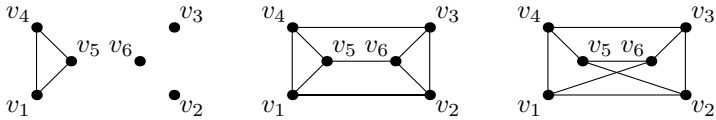

$$\epsfbox{table.200} \qquad \epsfbox{table.1} \qquad \epsfbox{table.2}$$
\caption{Examples of subgraphs of $K_6$ admitting tensegrities at
codimension 1 strata of $B_2(K_6)$.}\label{strata6}
\end{figure}
\end{exmp}

\begin{exmp}
In the case of $K_7$ there are the following new examples of
subgraphs, corresponding to the main 4 different types of strata.

From the left to the right we have the following geometric
conditions
\begin{itemize}
\item{$v_1$, $v_2$, and $v_3$  are in a line;}

\item{the lines $v_1v_2$, $v_3v_4$, and $v_5v_6$ meet in one point;}

\item{the lines $v_1v_2$, $v_3v_4$, and $v_5p$ (where
$p=v_2v_6 \cap v_3v_7$) have a common point;}

\item{the six points $v_1$, $v_2$, $v_3$, $v_4$, $v_5$,
and $p$ (where $p=v_1v_6\cap v_3v_7$) are on a conic.}
\end{itemize}

\begin{figure}
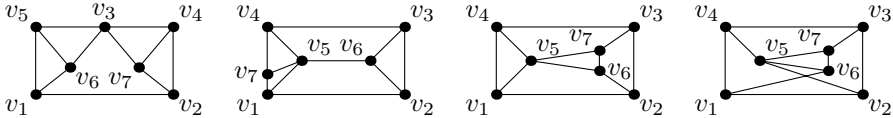

$$\epsfbox{table.3} \quad \epsfbox{table.4} \quad \epsfbox{table.5} \quad \epsfbox{table.6}$$
\caption{Examples of subgraphs of $K_7$ admitting tensegrities at
codimension 1 strata of $B_2(K_7)$.}\label{strata7}
\end{figure}
\end{exmp}

Note that the example for three points in a line is actually the 2-sum of
a triangle with two atoms, so the only way for a non-zero
self stress on the edges is to have $v_1$, $v_2$, and $v_3$, the vertices of the triangle, in a line.

\begin{rem}
Geometric conditions for the graphs with 8 and fewer vertices are
given in~\cite{DKS}. Several of those geometric
conditions were described before in terms of bracket polynomials
in~\cite{WW} by N.~L.~White and W.~Whiteley. We also refer to the
paper~\cite{BC} by E.~D.~Bolker and H.~Crapo for the relation of
bipartite graphs with rectangular bar constructions.
\end{rem}

\subsection{Surgeries on subgraphs that change geometric conditions in a predictable way}

In this subsection we present several surgeries that allow to guess
the geometric conditions for new strata (characterized by certain
subgraphs) via other strata (characterized by these graphs modified in a
certain way). We call such modifications of graphs {\it
surgeries}.

\subsubsection{Surgeries that do not change geometric conditions}

Let $G$ be a graph, denote by $G_e$ the graph with an edge $e$
removed.

\begin{prop}\label{Proposition 1.9.}{\bf(Edge exchange)}
Consider a graph $G$ and a subgraph $H$, and let $e_1$ and $e_2$
be two edges of $H$. Let $P$ be a configuration for which $\dim
W(H,P)=1$. Suppose also that the self stresses of $H$ do not
vanish at the edges $e_1$ and $e_2$. Then we have
$$
\dim W(G_{e_1},P) =\dim W(G_{e_2},P).
$$
\qed
\end{prop}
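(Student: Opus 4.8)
The plan is to exploit the hypothesis $\dim W(H,P)=1$ together with the non-vanishing at $e_1,e_2$ to transfer self stresses between $W(G_{e_1},P)$ and $W(G_{e_2},P)$ in a way that is manifestly invertible. Fix a generator $w_0$ of $W(H,P)$, and let $a_1 = (w_0)_{e_1} \ne 0$ and $a_2 = (w_0)_{e_2} \ne 0$ be its (nonzero) values on $e_1$ and $e_2$. The key observation is that a stress on $G$ restricted to $G_{e_i}$ is exactly a self stress of $G$ that is allowed to have an arbitrary ''residual'' force at the endpoints of $e_i$ supported only along the line of that edge; equivalently, $W(G_{e_i},P)$ is the space of stress assignments on the edges of $G$ other than $e_i$ that satisfy the equilibrium equation at every vertex not incident to $e_i$, and at the two endpoints of $e_i$ up to a multiple of the edge vector.

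First I would set up the linear map $\Phi\colon W(G_{e_1},P)\to W(G_{e_2},P)$ as follows. Given $w\in W(G_{e_1},P)$, consider the force defect it produces at the two endpoints of $e_1$: this defect is a vector along $p_ip_j$ at each of the two endpoints (and, by Newton's third law / the fact that the total of the equilibrium equations vanishes, these two defect vectors are opposite), so it equals $\lambda(w)\cdot a_1^{-1}$ times the defect produced by $w_0$ (whose defect along $e_1$ we can normalize), for a well-defined scalar $\lambda(w)$. Now add $\lambda(w)\cdot a_1^{-1} w_0$ to $w$: the sum is a genuine self stress of $G$, and in particular a self stress of $G_{e_2}$ after we delete its value on $e_2$. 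Define $\Phi(w)$ to be this element of $W(G_{e_2},P)$. Symmetrically one defines $\Psi\colon W(G_{e_2},P)\to W(G_{e_1},P)$, using $a_2$ in place of $a_1$.

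The second step is to check $\Psi\circ\Phi=\mathrm{id}$ and $\Phi\circ\Psi=\mathrm{id}$, which is a direct computation: adding a multiple of $w_0$ and then subtracting the corresponding multiple of $w_0$ again returns the original stress, because $w_0$ is the unique (up to scalar) self stress of $H$ and the correcting scalar is pinned down by the defect along the relevant edge; the non-vanishing of $a_1$ and $a_2$ is exactly what guarantees these scalars are the ''right'' ones and that no information is lost. Hence $\Phi$ is a linear isomorphism, so $\dim W(G_{e_1},P)=\dim W(G_{e_2},P)$.

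The main obstacle I anticipate is bookkeeping the force-defect correspondence cleanly: one must verify that for \emph{any} $w\in W(G_{e_1},P)$ the residual force at the endpoints of $e_1$ really is parallel to $p_ip_j$ (this uses that $w$ is in equilibrium at all other vertices and that the defects at the two endpoints must cancel, since summing all vertex equations gives zero), and that the whole construction does not secretly require $H$ itself to be a subgraph of $G$ in some restrictive way — here it suffices that the edges $e_1,e_2$ lie in $G$ and that $w_0$, viewed as a stress on $G$ supported on $H$'s edges, can be added to stresses of $G_{e_i}$. Once that is pinned down, the isomorphism is essentially forced and the dimension equality follows.
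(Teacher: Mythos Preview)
The paper states this proposition without proof (it ends with a bare \qed), so there is nothing to compare against directly; but your argument has a genuine gap worth flagging.

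Your ``key observation'' mischaracterizes $W(G_{e_i},P)$. By the paper's definitions, $W(G_{e_i},P)$ is the space of \emph{self} stresses on the framework $G_{e_i}(P)$: every vertex, including the two endpoints of $e_i$, must be in exact equilibrium. There is no residual force. What you describe --- stresses on the edges of $G$ other than $e_i$ that are in equilibrium everywhere except possibly along the direction of $e_i$ at its endpoints --- is canonically isomorphic to $W(G,P)$ itself (extend by the unique value on $e_i$ that cancels the residual), not to $W(G_{e_i},P)$. With your characterization the conclusion becomes the tautology $\dim W(G,P)=\dim W(G,P)$. And if one reads your map $\Phi$ using the correct definition, then every $w\in W(G_{e_1},P)$ has defect $\lambda(w)=0$, so $\Phi(w)=w$; dropping the $e_2$-coordinate of $w$ does not in general yield a self stress of $G_{e_2}$.

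The fix keeps your core idea but redirects the correction. View $W(G_{e_i},P)$ as the subspace $\{w\in W(G,P):w_{e_i}=0\}$, and regard $w_0$ (extended by zero off $H$) as an element of $W(G,P)$ with $(w_0)_{e_1}=a_1\ne 0$ and $(w_0)_{e_2}=a_2\ne 0$. Then
\[
\Phi(w)\;=\;w-\frac{w_{e_2}}{a_2}\,w_0
\]
sends $W(G_{e_1},P)$ into $W(G_{e_2},P)$, and the analogous $\Psi$ using $a_1$ is its inverse. Equivalently and more briefly: the evaluation functionals $w\mapsto w_{e_1}$ and $w\mapsto w_{e_2}$ on $W(G,P)$ are both nonzero (witnessed by $w_0$), so their kernels $W(G_{e_1},P)$ and $W(G_{e_2},P)$ both have dimension $\dim W(G,P)-1$.
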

In the situation of Proposition~\ref{Proposition 1.9.} the strata of $G_{e_1}(P)$ and $G_{e_2}(P)$ are defined by the same
geometrical conditions.

\subsubsection{Two two-dimensional surgeries that change geometric conditions}

The first surgery is described in the following proposition.

\begin{prop}\label{operation1}
Consider the frameworks $G(P)$, $G_1^{I}(P_1^{I})$, and
$G_2^{I}(P_2^{I})$ as on the figure:
$$
\epsfbox{operations.1}
$$
If none of the triples of points $(p,v_2,v_3)$, $(q,v_2,v_3)$,
$(p,v_2,v_4)$, $(q,v_3,v_4)$ and $(v_2,v_3,v_4)$ are
on a line
then we have
$$
\dim W(G_1^{I},P_1^{I}) = \dim W(G_2^{I},P_2^{I}).
$$
\end{prop}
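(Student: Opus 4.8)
The strategy I would follow is to eliminate the ``local'' vertex $p$ (resp. $q$) from the equilibrium equations and then check that in the two cases what remains is one and the same linear system, so that the two self‑stress spaces have equal dimension. The key structural fact to exploit from the picture is that $G_1^{I}(P_1^{I})$ and $G_2^{I}(P_2^{I})$ coincide outside a small gadget attached at $p$, respectively at $q$: every extra edge of each gadget is incident to $p$ (resp. $q$) or runs among $v_2,v_3,v_4$, and on the remaining vertices both configurations are literally the same (in particular the triple $v_2,v_3,v_4$ sits in the same position in both). If convenient, the shared part may be normalised by an affine map thanks to Proposition~\ref{aff}.

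I would then set up notation as follows. Record a self stress $w$ of $G_i^{I}(P_i^{I})$ as its restriction $w'$ to the edges lying outside the gadget together with the values on the gadget edges. The equilibrium condition at any vertex other than $p,q,v_2,v_3,v_4$ involves only $w'$; the condition at $v_2,v_3,v_4$ is the ``outside'' equilibrium plus the force transmitted there by the gadget; and the condition at $p$ (resp. $q$) is a linear system in the gadget stresses with coefficient vectors $p-v_j$ (resp. $q-v_j$). Because the triples $(p,v_2,v_3)$, $(p,v_2,v_4)$ and $(v_2,v_3,v_4)$ are not collinear, the equilibrium system at $p$ has maximal rank $2$; hence two of the gadget stresses can be solved for in terms of the others, and after substitution the force that the $p$-gadget delivers to $(v_2,v_3,v_4)$ is an explicit linear function of the remaining free parameters. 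This produces a reduced linear system, supported on the outside edges plus boundedly many scalars, whose solution space has exactly the dimension of $W(G_1^{I},P_1^{I})$. Running the same elimination at $q$, now using that $(q,v_2,v_3)$, $(q,v_3,v_4)$ and $(v_2,v_3,v_4)$ are not collinear, gives a reduced system for $W(G_2^{I},P_2^{I})$ of exactly the same shape.

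It then remains to see that the two reduced systems coincide. The only datum that could tell them apart is the subspace $\mathcal L\subseteq(\r^2)^3$ of loads that a gadget can transmit to $(v_2,v_3,v_4)$, together with the intrinsic self-stress dimension of the gadget alone; the latter is read off at once and is the same on both sides. Since $(v_2,v_3,v_4)$ is not collinear, $\mathcal L$ is in both cases the full space of triples of force vectors with zero resultant and zero torque — equivalently, the loads resolvable by stresses on the three edges of the non-degenerate triangle $v_2v_3v_4$ — while the remaining collinearity hypotheses are precisely what rules out, for either position of the added vertex, the degeneracies that would drop a rank. Hence the two reduced systems are literally equal and $\dim W(G_1^{I},P_1^{I})=\dim W(G_2^{I},P_2^{I})$.

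The main obstacle is this last comparison: pinning down the exact linear-algebraic identity among the vectors $p-v_j$, $q-v_j$ and $v_j-v_k$ that forces the two transmitted-load spaces to agree, and confirming that each one of the five listed non-collinearity conditions is genuinely needed to keep the corresponding rank maximal — in particular accounting for why the triples $(p,v_3,v_4)$ and $(q,v_2,v_4)$ do \emph{not} occur on the list. Everything else is routine bookkeeping with the equilibrium matrix.
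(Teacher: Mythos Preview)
The paper does not actually prove this proposition; both planar surgeries are quoted from~\cite{DKS} (see the remark after Proposition~\ref{operation2}). The only surgery proved in the present paper is the three-dimensional one, Proposition~\ref{3doperation}, and that proof proceeds by an explicit bijection: to a given self stress one adds the unique atom on $v_1,v_2,v_3,v_4$ that cancels a chosen gadget edge, and the non-degeneracy hypotheses are exactly what force the remaining gadget edges to be cancelled simultaneously. The argument in~\cite{DKS} for the planar Surgery~I is of the same flavour.

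Your approach is genuinely different: instead of exhibiting a bijection via atoms, you eliminate the local vertex from the equilibrium matrix on each side and claim the two reduced systems coincide. The organising idea---that what matters is the subspace $\mathcal L\subseteq(\r^2)^3$ of equilibrium loads the gadget can deliver to $(v_2,v_3,v_4)$, together with the gadget's internal self-stress dimension---is sound, and the non-collinearity of $(v_2,v_3,v_4)$ does pin down the target as the full three-dimensional space of zero-resultant, zero-torque loads. The atom-addition proof buys you a shorter verification and makes transparent why exactly the five listed triples appear; your route is more systematic and would generalise more readily to other gadgets.

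That said, what you yourself flag as ``the main obstacle'' is a genuine gap, not just bookkeeping. You have asserted, but not checked, that each gadget under its own hypotheses surjects onto $\mathcal L$ and has the same internal self-stress dimension; this is precisely where the specific combinatorics of the figure and the asymmetric list of forbidden triples (no $(p,v_3,v_4)$, no $(q,v_2,v_4)$) must enter. Until you write down the gadget edges explicitly and carry out that rank computation, the argument remains a plan rather than a proof.
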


\begin{exmp}
Let us consider a simple example of how to get a geometric condition
for the graph
$$
\epsfbox{table.5}
$$
to admit a tensegrity knowing all geometric conditions for 6-point
graphs. Let us apply Surgery I to the points $v_5$, $v_6$, $v_7$.
We have:
$$
\epsfbox{table.5} \epsfbox{table.202}.
$$
The geometric condition to admit a tensegrity for the graph on the right is:

{\it the lines $v_1v_2$, $v_3v_4$ and $v_5p$ intersect in a
point}.
\\
Hence the geometric condition for the original graph is:

{\it the lines $v_1v_2$, $v_3v_4$ and $v_5p$ intersect in a point,
where $p=v_2v_6\cap v_3v_7$}.
\end{exmp}

Now let us show the second surgery.

\begin{prop}\label{operation2}
Consider the frameworks $G(P)$, $G_1^{II}(P_1^{II})$, and
$G_2^{II}(P_2^{II})$ as on the following figure:
$$\epsfbox{operations.2}$$
If none of the triples of points $(p,q,v_1)$, $(p,v_1,v_4)$,
$(r,v_1,v_4)$, $(q,v_1,v_4)$, $(s,v_1,v_4)$, or $(r,s,v_4)$ lie on
a line then we have
$$
\dim W(G_1^{II},P_1^{II}) = \dim W(G_2^{II},P_2^{II}).
$$
\qed
\end{prop}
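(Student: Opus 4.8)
The plan is to reduce Proposition~\ref{operation2} to the first surgery (Proposition~\ref{operation1}) by applying the latter twice, rather than redoing a direct stress computation. Looking at the figures, Surgery~II should be expressible as the composition of two instances of Surgery~I performed at two different ``local'' vertices of the subgraph: the picture for $G_1^{II}$ and $G_2^{II}$ differs by two elementary moves of the type appearing in Proposition~\ref{operation1}, each of which replaces a small fan of edges incident to an internal vertex by the corresponding configuration with an intersection point. So first I would identify the intermediate framework $G(P)$ sitting ``between'' $G_1^{II}$ and $G_2^{II}$, check that the first application of Surgery~I has its five non-collinearity hypotheses implied by the six listed in Proposition~\ref{operation2} (these correspond to the triples involving $p$, $q$ and $v_1$, and the triple $(p,v_1,v_4)$), and conclude $\dim W(G_1^{II},P_1^{II}) = \dim W(G,P)$. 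Then the second application of Surgery~I, at the other internal vertex, uses the remaining listed triples (those involving $r$, $s$, $v_1$, $v_4$) and gives $\dim W(G,P) = \dim W(G_2^{II},P_2^{II})$.

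If the reduction to Proposition~\ref{operation1} turns out not to be clean, the fallback is a direct equilibrium argument. One writes the self-stress equations vertex by vertex for $G_1^{II}(P_1^{II})$. The auxiliary vertices $p,q$ (and $r,s$) each have degree small enough that their equilibrium conditions let one solve for the stresses on the edges incident to them in terms of the stresses on the ``boundary'' edges; here the non-collinearity hypotheses are exactly what guarantee the relevant $2\times 2$ systems are invertible, so no jump in fiber dimension is introduced when these internal vertices are eliminated. Carrying out this elimination on both sides produces two reduced linear systems on the common set of boundary edge-stresses, and one checks these systems have the same solution space (indeed the same coefficient matrix up to invertible row/column operations), whence the dimensions agree.

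The main obstacle I expect is bookkeeping: matching the six non-collinearity triples in the statement to the precise hypotheses needed for the two invocations of Surgery~I (or to the invertibility of the elimination systems), and making sure no degenerate sub-case — e.g. a boundary edge stress forced to be zero, which would change which triples are ``active'' — is overlooked. In particular one must be careful that the intermediate framework $G(P)$ is well-defined, i.e.\ that the intersection points used in it actually exist (are not at infinity), which is presumably why triples such as $(p,q,v_1)$ and $(r,s,v_4)$ appear in the hypothesis list even though they may not be needed for either single application of Surgery~I in isolation. Once the hypotheses are correctly lined up, the conclusion $\dim W(G_1^{II},P_1^{II}) = \dim W(G_2^{II},P_2^{II})$ follows immediately, and — as in the remark after Proposition~\ref{Proposition 1.9.} — the two strata are then cut out by the same geometric condition. \qed
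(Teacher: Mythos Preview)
The paper does not actually prove Proposition~\ref{operation2}: the statement ends with a bare \qed\ and the remark immediately following it says that both surgeries ``were shown in~\cite{DKS}''. So there is no argument in this paper to compare your proposal against.

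That said, your two strategies are both reasonable. The fallback you describe --- eliminating the internal vertices by solving their equilibrium equations, with the listed non-collinearity hypotheses guaranteeing invertibility of the local $2\times 2$ systems --- is the standard way such statements are established and is almost certainly what~\cite{DKS} does. Your first idea, factoring Surgery~II as two instances of Surgery~I, is appealing but, as you already suspect, the bookkeeping is the real issue: Surgery~I carries five non-collinearity hypotheses, so two applications would naively need ten, whereas Proposition~\ref{operation2} lists only six. Some of these would have to collapse or be automatically satisfied by the geometry of the intermediate framework, and verifying that without the figure in front of you is delicate. If you cannot make the hypothesis count line up cleanly, go straight to the direct elimination; it is short and avoids the matching problem entirely.
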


\begin{rem}
Both surgeries were shown in~\cite{DKS}. There is a
certain analogy of the first surgery to $\Delta Y$ exchange in
matroid theory (see for instance~\cite{Whi}
and~\cite{JJ} for the connections between matroids and rigidity theory), but it is not exactly the same.
\end{rem}

\begin{rem}
Actually these surgeries are valid in the multidimensional case as
well under the condition that certain points are in one plane.
\end{rem}

\subsection{A new tensegrity surgery in $\r^3$}

We conclude this paper with a single surgery for tensegrities in
$\r^3$.

\begin{prop}\label{3doperation}
Consider a graph $G$ and frameworks $G(P)$, $G_1(P_1)$, and
$G_2(P_2)$ as follows:
$$
\epsfbox{3doperations.1}
$$
Denote the plane $v_2v_3v_4$ by $\pi_1$.  Suppose that the couples of edges
$e_1$ and $e_2$, $e_3$ and $e_4$, $e_5$ and $e_6$ define planes $\pi_2$, $\pi_3$, and $\pi_4$, different from $\pi_1$. Assume that
$\pi_2\cap\pi_3\cap\pi_4$ is a one point intersection.

If $G_1(P_1)$ and $G_2(P_2)$ have nonzero stress on the edges connecting $v_1$,
$v_2$, $v_3$, and $v_4$ then
$$
\pi_1\cap\pi_2\cap\pi_3\cap\pi_4=v_1.
$$
In this case we additionally have
$$
\dim W(G_1,P_1) = \dim W(G_2,P_2).
$$
\end{prop}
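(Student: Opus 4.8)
The plan is to exploit the equilibrium condition at the vertex $v_1$ and the three auxiliary vertices $v_2,v_3,v_4$, treating the two frameworks $G_1(P_1)$ and $G_2(P_2)$ in parallel. First I would set up coordinates and write out the self-stress equations. Fix a nonzero self stress $w$ for, say, $G_1(P_1)$. By hypothesis $w$ is nonzero on the edges joining $v_1,v_2,v_3,v_4$. Look at the equilibrium at $v_1$: the sum $\sum_j w_{1,j}(p_j-p_1)=0$ is a relation among the vectors pointing from $v_1$ to its neighbors. The neighbors of $v_1$ in $G_1$ (as one reads off the figure) are $v_2,v_3,v_4$ together with the far endpoints of $e_1,\dots,e_6$; grouping the terms according to the planes $\pi_2,\pi_3,\pi_4$ and $\pi_1$, the equilibrium at $v_1$ expresses $0$ as a sum of four vectors, one lying in (the direction space of) each $\pi_i$ translated to $v_1$.

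The key geometric step is then: since $\pi_2\cap\pi_3\cap\pi_4$ is a single point, the three planes are in "general position" (their normals are linearly independent), so $\r^3 = T_2 \oplus (T_3\cap T_4)$ etc., and more to the point the only way to write $0$ as $u_2+u_3+u_4+u_1$ with $u_i$ in the direction plane of $\pi_i$ is forced once we also know something about $\pi_1$. I would argue as follows. The contributions to the equilibrium at $v_1$ coming through $e_1,e_2$ are scalar multiples of vectors lying in $\pi_2$ (because $e_1,e_2$ and hence the segments from $v_1$ to those neighbors lie in $\pi_2$ — here I use that $v_1$ itself lies on $\pi_2$, which is part of the configuration in the figure), similarly for $\pi_3,\pi_4$; and the contributions through the edges $v_1v_2,v_1v_3,v_1v_4$ are vectors in the plane through $v_1$ parallel to $\pi_1$. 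So the equilibrium at $v_1$ reads $u_2+u_3+u_4 = -u_1$ with $u_i$ in the linear plane $\bar\pi_i$ (direction space) for $i=2,3,4$ and $u_1\in\bar\pi_1$. Now I claim each $u_i$ ($i=2,3,4$) is nonzero: indeed $u_i$ is a combination of the two stresses on $e_{2i-3},e_{2i-2}$ times the corresponding edge vectors from $v_1$; if the far vertex of $e_1$, say, equals a vertex of the triangle $v_2v_3v_4$, this needs a short separate bookkeeping, but generically the stresses being nonzero on the relevant edges plus the non-collinearity hypotheses force $u_i\ne 0$ — and in any case $u_2+u_3+u_4$ together with $u_1$ must vanish.

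From $u_2+u_3+u_4\in \bar\pi_1$ and $u_i\in\bar\pi_i$, I would deduce that the point $v_1$ lies on all four planes. Concretely: consider the point $x := \pi_2\cap\pi_3\cap\pi_4$, the unique common point. The equilibrium relations at $v_2,v_3,v_4$ (again $\sum_j w_{k,j}(p_j-p_k)=0$) combined with the ones at $v_1$ give a system whose solvability with nonzero stress on the tetrahedron $v_1v_2v_3v_4$ is, by a now-standard argument for $K_4$-type atoms (cf.\ the atom discussion for $B_2(K_4)$ and $B_3(K_4)$), equivalent to the four points $v_1,v_2,v_3,v_4$ being positioned so that the "dual" point is consistent — and tracing this through yields precisely $v_1=x$, i.e.\ $\pi_1\cap\pi_2\cap\pi_3\cap\pi_4=v_1$. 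The cleanest way: the stress on the triangle $v_2v_3v_4$ plus the struts/cables to $v_1$ is a self stress of a "double cone" over the triangle only if $v_1$ lies in the plane spanned by the three lines $e_1\cap\pi_1$-type intersections, which is exactly the statement $v_1\in\pi_2,\pi_3,\pi_4$; since $v_1\in\pi_1$ trivially ($v_1\notin\pi_1$ is excluded because otherwise the tetrahedron is degenerate — wait, here one must be careful, $\pi_1=v_2v_3v_4$ and $v_1$ need not be on it a priori), actually the conclusion $v_1\in\pi_2\cap\pi_3\cap\pi_4=\{x\}$ already pins $v_1=x$, and then $x\in\pi_1$ would be an extra constraint — so in fact the correct reading is that the hypothesis "$G_i(P_i)$ has nonzero stress on the tetrahedron edges" forces $v_1$ to coincide with $x$, whence trivially $\pi_1\cap\pi_2\cap\pi_3\cap\pi_4 = \{x\}\cap\pi_1$; and since $v_1\in\pi_1$ fails in general, one re-examines the figure: very likely $\pi_1$ in the figure \emph{does} pass through $v_1$, or the intended statement is $\pi_2\cap\pi_3\cap\pi_4 = v_1$ with $\pi_1$ recorded for bookkeeping. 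I would resolve this by reading the figure carefully and stating the hypothesis so that $v_1$ lies on $\pi_1$; then everything is consistent.

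For the dimension equality $\dim W(G_1,P_1)=\dim W(G_2,P_2)$, I would argue that the local picture near $v_1$ is symmetric between $G_1$ and $G_2$: both surgeries replace the star at $v_1$ by an equivalent set of force-balance constraints once $v_1=x$, so the linear map sending a self stress of $G_1(P_1)$ to its restriction away from the modified edges, then re-extending using the (now uniquely solvable) local equations at $v_1$, is an isomorphism $W(G_1,P_1)\xrightarrow{\sim}W(G_2,P_2)$. This mirrors the proofs of Propositions~\ref{operation1} and~\ref{operation2}.

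\textbf{Main obstacle.} The genuinely delicate part is the linear-algebra step that turns "$u_2+u_3+u_4+u_1=0$ with $u_i$ in the direction plane of $\pi_i$, and the $u_i$ for $i=2,3,4$ individually nonzero" into "$v_1$ is the common point $\pi_2\cap\pi_3\cap\pi_4$", i.e.\ correctly identifying which plane each edge-contribution lies in and verifying that the non-collinearity hypotheses in the statement are exactly the ones needed to keep those contributions nonzero and in general position; getting the incidence hypothesis between $v_1$ and $\pi_1$ right (so the conclusion is not vacuous) is the subtle bookkeeping I would want to nail down from the figure.
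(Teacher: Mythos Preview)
Your reading of the figure is off, and this derails the whole argument. The edges $e_1,e_2$ are incident to $v_2$ (so $\pi_2$ is the plane through $v_2$ and the far ends of $e_1,e_2$), likewise $e_3,e_4$ sit at $v_3$ and $e_5,e_6$ at $v_4$; none of the $e_i$ meet $v_1$. The surgery replaces the triangle $v_2v_3v_4$ present in $G_1(P_1)$ by a new vertex $v_1$ joined only to $v_2,v_3,v_4$ in $G_2(P_2)$. Thus your equilibrium analysis at $v_1$---decomposing into contributions $u_2,u_3,u_4$ coming from the $e_i$---rests on adjacencies that do not exist, and the relation $u_1+u_2+u_3+u_4=0$ you set up is not the equilibrium at $v_1$ in either framework.

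The paper's argument for the first assertion is the one-line observation you never reach: in $G_2(P_2)$ the vertex $v_1$ has valency exactly~$3$, and a valency-$3$ vertex in $\r^3$ can carry nonzero edge stresses only if it is coplanar with its three neighbours. This forces $v_1\in\pi_1$, which (together with $v_1\in\pi_2\cap\pi_3\cap\pi_4$, built into the construction of $P_2$) gives $\pi_1\cap\pi_2\cap\pi_3\cap\pi_4=v_1$ immediately. Your back-and-forth about whether $v_1\in\pi_1$ is exactly this step, but you try to extract it from the wrong equilibrium equation and never isolate the valency-$3$ coplanarity principle.

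For $\dim W(G_1,P_1)=\dim W(G_2,P_2)$ your ``restrict then re-extend'' sketch is not a proof; you do not say what is restricted, why the extension is unique, or why it lands in $W(G_2,P_2)$. The paper's map is explicit: to a self stress on $G_1(P_1)$ add the unique scalar multiple of the planar $K_4$ atom on the (now coplanar) points $v_1,v_2,v_3,v_4$ that kills the stress on $v_2v_3$; because $\pi_1\ne\pi_2$ the equilibrium at $v_2$ then forces the stress on $v_2v_4$ to vanish as well, and similarly for $v_3v_4$, yielding a self stress on $G_2(P_2)$. The inverse is the same atom addition in reverse, so this is a linear bijection.
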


\begin{proof}
The first statement follows since $v_1$ only has valency 3 in $G_2(P_2)$, so $v_1$, $v_2$, $v_3$, and $v_4$ need to be coplanar to have a nonzero edge stress.
Now we explain how to map $W(G_1,P_1)$ to $W(G_2,P_2)$. The inverse
map is simply given by the reverse construction. By the conditions
$v_1$ is the intersection point of the planes $\pi_1$, $\pi_2$, and
$\pi_3$. We add the uniquely defined plane atom on $v_1,v_2,v_3,v_4$ to
$G_1(P_1)$ that cancels the edge stress on $v_2v_3$. Since the
plane $\pi_1$ does not coincide with the plane $\pi_2$ spanned by the forces on $e_1$ and $e_2$,
the edge stress on $v_2v_4$ is also canceled. By the same reasons
the edge stress on $v_3v_4$ is canceled as well. This uniquely
defines a self stress on $G_2(P_2)$.
\end{proof}

\subsection{Some related open problems}

The next goal in this approach is to continue to study the
geometry of the strata. Ideally {\it one would like to find techniques
that will give geometric conditions for a graph via its
combinatorics}. This question seems to be a very hard open problem.
The study of surgeries is the first step to solve it at least in
codimension 1.

For a start we propose the following open question.
\begin{prob}
Find all geometric conditions for the strata of 9 point
tensegrities.
\end{prob}
The surgeries introduced in this section were extremely useful for
the study of 8 point configurations (see in~\cite{DKS}). We think
that it is not enough to know only these surgeries to find all
the geometric conditions. This gives rise to another question.
\begin{prob}
Find other surgeries on graphs that predictably change the
geometric conditions.
\end{prob}

As far as we know there is no  systematic study of strata for
tensegrities in $\r^3$ or higher dimensions: these cases are
much more complicated than the planar case.
At least the stratification of $B_3(K_5)$ should have a description
similar to that of $B_2(K_4)$, since 5 points in general position
in $\r^3$ admit a unique non-zero self stress.

Additionally one should examine the rigidity properties of subgraphs
in a stratum. For $K_4$ we have 14 strata of full dimension. For
8 of them the convex hull is a triangle, in 5 of the strata the points are in convex position.
A tensegrity for the convex position has 4 struts (cables) and two cables (struts), while in
the non-convex case there are three cables and three struts. All of these tensegrities are
(infinitesimally) rigid and struts and cables may be exchanged without destroying rigidity.
However, when viewed as graphs embedded in $\r^3$ only half of them are rigid. For the convex case, there must be cables on the convex hull and two struts. In the non-convex case there must be a triangle of struts on
the convex hull and three cables in the interior, termed a spider web by R.~Connelly. None of
these are proper forms in the sense of B.~Gr\"{u}nbaum. They are minimally rigid, but in the convex case they
have members intersecting in a vertex other than a vertex of the graph, in the non-convex case there
is a vertex without a strut. B.~Gr\"{u}nbaum in his lectures on lost mathematics~\cite{Grunbaum} asks about the number
of proper forms given $n$ struts. On 3 struts, there is only one tensegrity which is minimally rigid with
edges only intersecting at vertices and such that every vertex is endpoint of at least one strut. For 4 struts there are at least 20 forms, but it is not known how many there are. The number of forms on $n$ struts is bounded by the number of strata
on $B_3(K_n)$. For the hierarchies of the various kinds of rigidity see~\cite{Con}.

\vspace{5mm}

\end{document}